\documentclass[12pt,reqno,a4paper]{amsart}
\usepackage{amsmath,amssymb,amsthm,amsfonts}
\usepackage[dvips]{graphicx}
\usepackage{caption}
\usepackage{subcaption}
\usepackage{tikz}

\theoremstyle{plain}
\newtheorem{thm}{Theorem}[section]
\newtheorem{lem}[thm]{Lemma}

\theoremstyle{definition}
\newtheorem{defn}{Definition}[section]

\theoremstyle{remark}

\theoremstyle{plain}
\newtheorem{thmx}{Theorem}

\newcommand{\Rr}{\mathbb{R}}
\newcommand{\Nn}{\mathbb{N}}
\newcommand{\Zz}{\mathbb{Z}}

\newcommand{\I}{\mathcal{I}}
\newcommand{\ba}{\boldsymbol{a}}

\begin{document}

\title[Interval piecewise contractions]
{Hausdorff dimension of the exceptional set of interval piecewise affine contractions}

\author[Gaiv\~ao]{Jos\'e Pedro Gaiv\~ao}
\address{Departamento de Matem\'atica and CEMAPRE/REM, ISEG\\
Universidade de Lisboa\\
Rua do Quelhas 6, 1200-781 Lisboa, Portugal}
\email{jpgaivao@iseg.ulisboa.pt}

\date{\today}

\begin{abstract}
Let $I=[0,1)$,  $-1<\lambda<1$ and  $f\colon I\to I$ be a piecewise $\lambda$-affine map of the interval $I$, i.e.,  there exist a partition $0=a_0<a_1<\cdots< a_{k-1}<a_k=1$ of the interval $I$ into $k\geq2$ subintervals and $b_1,\ldots, b_k\in\Rr$ such that $f(x)=\lambda x+ b_i$ for every $x\in[a_{i-1},a_{i})$ and $i=1,\ldots,k$.  The exceptional set $\mathcal{E}_f$ of $f$ is the set of parameters $\delta\in\Rr$ such that $R_\delta\circ f$ is not asymptotically periodic, where $R_\delta\colon I\to I$ is the rotation of angle $\delta$.  In this paper we prove that $\mathcal{E}_f$ has zero Hausdorff dimension.  We derive this result from a more general theorem concerning piecewise Lipschitz contractions on $\Rr$ that has independent interest.
\end{abstract}

%
\maketitle

\section{Introduction}

Let $I=[0,1)$ and $-1<\lambda<1$.  We say that an interval map $f:I\to I$ is \textit{piecewise $\lambda$-affine} if there exist $k\geq 2$,  real numbers $b_1,\ldots, b_k$ and a partition of the interval $I$,
$$
0=a_0<a_1<\cdots<a_{k-1}<a_k=1,
$$
such that $f(x)=\lambda x+ b_i$ for every $x\in[a_{i-1}, a_i)$ and $i=1,\ldots, k$.  Given a piecewise $\lambda$-affine map $f$, consider the one-parameter family of piecewise $\lambda$-affine maps $f_\delta\colon I\to I$ defined by 
$$
f_\delta= R_\delta\circ f,
$$
where $R_\delta(x)=\{x+\delta\}$ is the rotation of angle $\delta\in\Rr$ and $\{\cdot\}$ denotes the fractional part.  See Figure~\ref{fig1} for an illustration of the graph of $f_\delta$. 

We are interested in the dimension of the exceptional set
$$
\mathcal{E}_f=\{\delta\in\Rr\colon f_\delta\text{ is not asymptotically periodic}\}.
$$
A map $f\colon I\to I$ is \textit{asymptotically periodic} if $f$ has at most a finite number of periodic orbits and the $\omega$-limit set $\omega(f,x)$ of any $x\in I$ is a periodic orbit of $f$.  We recall that $\omega(f,x)$ is the set of accumulation points of the forward orbit of $x$ under $f$.  It is known that $\mathcal{E}_f$ has zero Lebesgue measure \cite[Theorem 1.1]{NPR18}, but the question of the Hausdorff dimension of $\mathcal{E}_f$ was still open.  

\newpage
In this paper, we settle this question.

\begin{thmx}\label{thmA}
$$\dim_ H \mathcal{E}_f=0.$$
\end{thmx}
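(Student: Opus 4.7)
Theorem~\ref{thmA} will be deduced from the more general result on piecewise Lipschitz contractions referenced in the abstract. My strategy is as follows. First, by a standard fact for piecewise $\lambda$-contractions, such a map is asymptotically periodic if and only if the forward orbit of each one-sided image $f(a_i^\pm)$ of its discontinuities is eventually periodic. Hence $\mathcal{E}_f = \bigcup_{i, \pm} \mathcal{E}_f^{(i, \pm)}$, and it suffices to prove $\dim_H \mathcal{E}_f^{(i, +)} = 0$ for one discontinuity $a_i$.

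For each word $w = (w_1, \ldots, w_n) \in \{1, \ldots, k\}^n$, set
\[
J_{i, w} := \{\delta \in \Rr : f_\delta^j(a_i^+) \in [a_{w_j - 1}, a_{w_j}) \text{ for } 1 \leq j \leq n\}.
\]
This is an interval (modulo $\Zz$) on which the iterate $f_\delta^n(a_i^+)$ is affine in $\delta$ with slope $s_n = (1-\lambda^n)/(1-\lambda)$, uniformly in $[1, 1/(1-|\lambda|))$. The family $\{J_{i, w}\}_w$ partitions $\Rr/\Zz$. Setting $\mathcal{W}_n := \{w : J_{i, w} \cap \mathcal{E}_f^{(i, +)} \neq \emptyset\}$, one has the covering $\mathcal{E}_f^{(i, +)} \subseteq \bigcup_{w \in \mathcal{W}_n} J_{i, w}$ for every $n$, and the theorem of Nogueira-Pires-Rosales \cite[Theorem 1.1]{NPR18} says precisely that the total measure $\mu_n := \sum_{w \in \mathcal{W}_n} |J_{i, w}|$ tends to $0$.

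The main obstacle is upgrading this Lebesgue-measure-zero covering estimate to a Hausdorff-dimension-zero one. By H\"older's inequality,
\[
\sum_{w \in \mathcal{W}_n} |J_{i, w}|^s \leq |\mathcal{W}_n|^{1-s}\, \mu_n^{s} \qquad (0 < s < 1),
\]
so it suffices to show that $\log |\mathcal{W}_n| = o(\log(1/\mu_n))$ as $n \to \infty$. Two ingredients are needed: (i) an effective exponential rate of decay $\mu_n \leq e^{-cn}$, refining \cite{NPR18}; and (ii) a uniform sub-exponential bound $|\mathcal{W}_n| = e^{o(n)}$, reflecting the zero topological entropy of piecewise $\lambda$-contractions---individual orbits generate only sub-exponentially many symbolic configurations. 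The hardest step is to promote (ii) from a pointwise statement about individual maps $f_\delta$ to a bound uniform over $\delta \in \mathcal{E}_f$; this uniform combinatorial control is the technical heart of the general theorem announced in the abstract. Once (i) and (ii) are in place, $\sum_{w \in \mathcal{W}_n} |J_{i, w}|^s \to 0$ for every $s > 0$, whence $\dim_H \mathcal{E}_f = 0$.
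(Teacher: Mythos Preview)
Your ingredient (ii)---the uniform sub-exponential growth of itineraries over a neighbourhood of each bad parameter---is exactly right, and it is the paper's Lemma~\ref{lem:entropy}. But the route through H\"older and ingredient (i) has a genuine gap. The result of \cite{NPR18} gives only $|\mathcal{E}_f|=0$, with no rate; extracting $\mu_n\le e^{-cn}$ is not a ``refinement'' but a substantially stronger claim, and your cover by itinerary cells $J_{i,w}$ does not obviously have exponentially small total length. A cell $J_{i,w}$ meeting $\mathcal{E}_f^{(i,+)}$ can stay macroscopically large: nothing forces the orbit of $a_i^+$ under a bad $f_\delta$ to come within $e^{-cn}$ of a discontinuity during the first $n$ steps, so the constraint defining $J_{i,w}$ need not pinch it. Without (i), the H\"older bound $|\mathcal{W}_n|^{1-s}\mu_n^{\,s}$ does not go to $0$ for small $s$. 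There is also a setup issue you skip: $f_\delta=R_\delta\circ f$ acquires extra discontinuities (preimages under $f$ of $1-\{\delta\}$) that move with $\delta$, so your fixed partition $[a_{w_j-1},a_{w_j})$ and the decomposition $\mathcal{E}_f=\bigcup_{i,\pm}\mathcal{E}_f^{(i,\pm)}$ are not correct as written.

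The paper avoids both problems. It first conjugates by a rotation placing a gap of $f$ at $0$, so that for small $\delta$ the map $R_\delta$ introduces no new discontinuities, then lifts to a piecewise contraction $f_{\Phi,\ba}$ on $\Rr$ and reduces to Theorem~\ref{thmB}, where the perturbation translates the partition $\ba\mapsto\ba+\delta$. The key difference from your scheme is the \emph{choice of cover}: instead of the itinerary cells $J_{i,w}$ (variable, possibly large), the paper observes that if $f_{\Phi,\ba+\delta}$ is not asymptotically periodic then some $a_i+\delta$ must lie in the IFS limit set $\Omega^\varepsilon_{\Phi,\ba}$ (Lemmas~\ref{lem:finite} and~\ref{lem:Omega}), and this limit set is covered by the $\#\I^{(n)}_{\Phi,\ba}(\varepsilon)$ intervals of \emph{uniform} length $\ell_n\asymp\lambda_\Phi^{\,n}$ centred at the points $\phi_\omega(0)$ (Lemma~\ref{lem:Omega cover}). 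With equal-size intervals one has $\sum(\mathrm{diam})^d=(k-1)\,\#\I^{(n)}_{\Phi,\ba}(\varepsilon)\cdot\ell_n^{\,d}$ directly, and (ii) alone makes this tend to $0$ for every $d>0$. No H\"older step and no ingredient (i) are needed.
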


A notable example of piecewise $\lambda$-affine maps is the family of contracted rotations, $f(x)=\{\lambda x+b\}$ with $0<\lambda<1$ and $1-\lambda<b<1$. Contracted rotations have been extensively studied by many authors either as a dynamical system or related to applications, e.g.  \cite{B93, BC99, LN18, JO19, BKLN20,BS20}. In the case of contracted rotations, the exceptional set $\mathcal{E}_f$ has a Cantor structure and in this case Theorem~\ref{thmA} was proved by Laurent and Nogueira in \cite[Theorem 5]{LN18} by exploiting a combinatorial structure, associated to $\mathcal{E}_f$, which is reminiscent of the classical Stern-Brocot tree.  Janson and \"Oberg improved in \cite{JO19} the result of Laurent and Nogueira by considering other gauge functions in the definition of the Hausdorff dimension. 

\begin{figure}
\centering
\begin{subfigure}{.3\textwidth}
  \centering
  \includegraphics[width=0.9\linewidth]{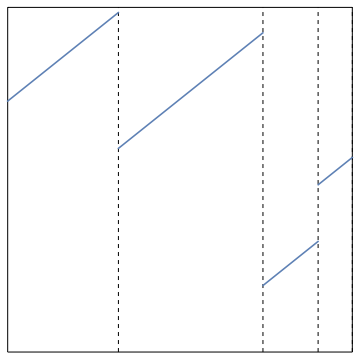}
  \caption{$f_0$}
\end{subfigure}%
\begin{subfigure}{.3\textwidth}
  \centering
  \includegraphics[width=0.9\linewidth]{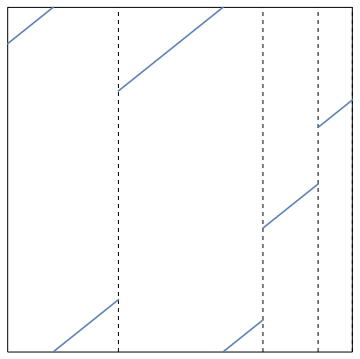}
  \caption{$f_{\frac16}$}
\end{subfigure}
\begin{subfigure}{.3\textwidth}
  \centering
  \includegraphics[width=0.9\linewidth]{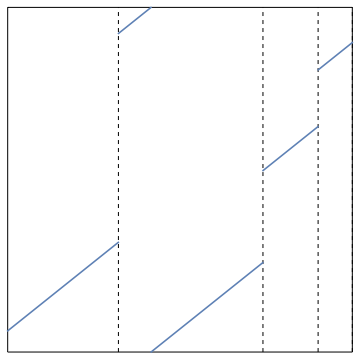}
  \caption{$f_{\frac13}$}
\end{subfigure}
\caption{Plots of $f_\delta$. }
\label{fig1}
\end{figure}

For general piecewise $\lambda$-affine maps, Theorem~\ref{thmA} was proved in \cite{B20} by Pires using the theory of $b$-adic expansions and under the assumptions that $f$ is injective, $\lambda^{-1}=b$ is a positive integer $\geq k$ and the connected components of $I\setminus f(I)$ have equal length. In order to remove all these assumptions and prove Theorem~\ref{thmA} for any piecewise $\lambda$-affine map $f$ we use a different strategy inspired by a recent work dealing with piecewise increasing contractions \cite{GN22}.

We will deduce Theorem~\ref{thmA} from  a more general result, Theorem~\ref{thmB} below.   We say that a function $f\colon \Rr\to\Rr$ is a \textit{piecewise contraction} if it has a finite number of discontinuity points and on each connected component $D$ of the domain of continuity of $f$ the restriction map $f|_D$ is a Lipschitz contraction.
Let $k\geq2$ and
$$
\Omega_{k} = \{(a_1,\ldots, a_{k-1})\in\Rr^{k-1}\colon a_1<\cdots<a_{k-1}\}.
$$
A finite collection of Lipschitz contractions $\Phi=\{\phi_i\colon \Rr\to \Rr\}_{i=1}^k$ is called an iterative function system (IFS).  An IFS $\Phi=\{\phi_1,\ldots,\phi_k\}$ together with $\ba\in\Omega_{k}$, determine a  piecewise contraction $f=f_{\Phi,\ba}\colon \Rr\to \Rr$ defined by 
\begin{equation}\label{def of f}
f(x)=\begin{cases}\phi_1(x),& x\in(-\infty,a_1),\\
\phi_i(x),& x\in[a_{i-1},a_{i}) \quad i\in\{2,\ldots,k-1\},\\
\phi_k(x),& x\in [a_{k-1},+\infty).
\end{cases}
\end{equation}
Notice that $(\Phi,\ba)\mapsto f_{\Phi,\ba}$ is not injective, i.e., a piecewise contraction is not uniquely determined by a pair $(\Phi,\ba)$.
In \cite{NPR18} the authors prove the following result.

\begin{thm}[{\cite[Theorem 1.4]{NPR18}}]\label{th:Arnaldo}
Let $\Phi=\{\phi_1,\ldots,\phi_k\}$  be an IFS.  There is a Lebesgue full measure set $W\subset \Rr$ such that 
for every $\ba \in\Omega_{k} \cap W^{k-1}$, the map $f_{\Phi,\ba}$ is asymptotically periodic and has at most $k$ periodic orbits.
\end{thm}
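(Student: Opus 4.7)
The plan is to combine a symbolic coding of orbits with a measure-theoretic transversality argument. Set $\lambda = \max_i \mathrm{Lip}(\phi_i) < 1$. Since each $\phi_i$ is a Lipschitz contraction of $\Rr$, there exists $M > 0$ such that $K = [-M, M]$ is forward-invariant under every $\phi_i$ and every orbit of $f := f_{\Phi, \ba}$ enters $K$ after finitely many steps. For $x \in K$ let $\iota(x) = (s_0, s_1, \ldots) \in \{1,\ldots,k\}^{\Nn}$ be the itinerary recording the piece that contains $f^n(x)$. For a finite word $w = (i_1, \ldots, i_p)$, the composition $\Phi_w = \phi_{i_p} \circ \cdots \circ \phi_{i_1}$ is a $\lambda^p$-contraction with a unique fixed point $p_w$. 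If $\iota(x)$ is eventually periodic with period $w$, the orbit of $x$ converges to the cycle generated by $p_w$ and is therefore asymptotically periodic. Hence it suffices to prove that, for Lebesgue almost every $\ba$, every realized itinerary is eventually periodic.

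The heart of the argument is rigidity: for any infinite word $\underline{s}$, the limit $x(\underline{s}) = \lim_{n \to \infty} \Phi_{s_0 \cdots s_n}(y)$ exists, is independent of $y \in K$, and is the only point that could realize $\underline{s}$. For $x(\underline{s})$ to actually realize $\underline{s}$ under $f_{\Phi, \ba}$, the strict inequalities $a_{s_n - 1} \leq f^n(x(\underline{s})) < a_{s_n}$ must all hold. A non-eventually-periodic $\underline{s}$ thus imposes a countable collection of such conditions, whose boundary consists of relations of the form $a_j = f^n(x(\underline{s}))$. Although the family of non-eventually-periodic $\underline{s}$ is uncountable, two sequences agreeing in their first $N$ symbols produce values of $x(\underline{s})$ differing by $O(\lambda^N)$, so the bad parameter set can be reduced to a countable union indexed by finite words, each contributing a measure-zero hyperplane in $\Rr^{k-1}$. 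To upgrade this to the product form $W^{k-1}$ of the theorem, one observes that after fixing the other coordinates of $\ba$ generically, the set of offending values of any single $a_j$ is a countable set of points, hence null in $\Rr$; one then defines $W$ as the complement of a countable union of such one-dimensional null sets.

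The bound of at most $k$ periodic orbits follows from a pigeonhole argument: each periodic cycle with symbolic period $w$ must contain $p_w$, and two distinct cycles inside a single piece of the partition would contradict the uniqueness of the fixed point of the corresponding composition of contractions, so each of the $k$ pieces hosts at most one cycle. The main obstacle is the transversality step: controlling the uncountable family of codings by finite combinatorial data, using the $\lambda^n$-contraction to tame tails, and ensuring the bad parameter set decouples component-wise so that a single full-measure $W \subset \Rr$, rather than merely a full-measure subset of $\Rr^{k-1}$, suffices.
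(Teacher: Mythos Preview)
This theorem is not proved in the paper; it is quoted from \cite{NPR18} and used as input. So there is no ``paper's own proof'' to compare against, and your proposal must stand on its own. As written it does not.

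The central claim that $x(\underline{s})=\lim_{n\to\infty}\Phi_{s_0\cdots s_n}(y)$ exists is false. With your convention $\Phi_{s_0\cdots s_n}=\phi_{s_n}\circ\cdots\circ\phi_{s_0}$, the sequence $\Phi_{s_0\cdots s_n}(y)$ is simply the forward orbit $f^{n+1}(y)$ of a point with itinerary $\underline{s}$, and such an orbit need not converge: already for $\phi_1(x)=x/2$, $\phi_2(x)=x/2+1/2$ and $\underline{s}=(1,2,1,2,\ldots)$ it accumulates on a $2$-cycle. What does converge is the reversed composition $\phi_{s_0}\circ\cdots\circ\phi_{s_n}(y)$, but that limit is the IFS address of a point in the attractor, not ``the only point that could realize $\underline{s}$'' under the forward dynamics; in fact an entire subinterval of $K$ can share the same forward itinerary. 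Once this identification fails, the rest of the transversality sketch collapses: you no longer have, for each non-eventually-periodic $\underline{s}$, a single equation $a_j=f^n(x(\underline{s}))$ cutting out a hyperplane, and the appeal to ``$O(\lambda^N)$'' to pass from uncountably many codings to countably many finite words is never made precise. The further step of decoupling the bad set into a product $W^{k-1}$ with a single full-measure $W\subset\Rr$ independent of the remaining coordinates is asserted but not argued.

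The pigeonhole argument for the bound of $k$ periodic orbits is also incorrect. Distinct periodic orbits can visit the same partition piece (e.g.\ a fixed point in $X_1$ together with a $2$-cycle through $X_1$ and $X_2$), so ``each of the $k$ pieces hosts at most one cycle'' is false as stated. The actual bound in \cite{NPR18} comes from showing that every periodic orbit attracts the one-sided orbit of some boundary point $a_i$ (equivalently, from counting connected components of $K\setminus Q_{\ba}$ once $Q_{\ba}$ is known to be finite), not from uniqueness of fixed points of $\Phi_w$.
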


We consider a very specific perturbation of $f_{\Phi,\ba}$.
To simplify the notation, we shall write $\ba+\delta = (a_1+\delta,\ldots,a_{k-1}+\delta)$. Notice that $\ba+\delta\in\Omega_k$ for every $\delta\in\Rr$.

We say that an IFS  is \textit{injective} if all its contractions are injective functions. 
Under the assumption that the IFS is injective, we prove the following result.

\begin{thmx}\label{thmB}
Let $\Phi=\{\phi_1,\ldots,\phi_k\}$  be an injective IFS and $\ba\in\Omega_k$. Then
$$
\dim_H\left\{ \delta\in  \Rr \colon f_{\Phi,\ba+\delta}\text{ is not asymptotically periodic}\right\}=0.
$$
\end{thmx}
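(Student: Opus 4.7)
The plan is to cover the exceptional set $\mathcal{E} := \{\delta \in \Rr : f_{\Phi,\ba+\delta} \text{ is not asymptotically periodic}\}$, restricted to a bounded interval, by collections of intervals whose $s$-dimensional Hausdorff content vanishes for every $s > 0$. Let $\lambda := \max_i \mathrm{Lip}(\phi_i) < 1$, and call the one-sided limits of $f_{\Phi,\ba+\delta}$ at the discontinuities $a_l + \delta$, $l = 1, \ldots, k-1$, the \emph{critical values}---there are $2(k-1)$ of them, each a Lipschitz function of $\delta$ with Lipschitz constant at most $\lambda$. I would first show that $\delta \in \mathcal{E}$ if and only if the itinerary---i.e., the sequence of pieces visited by the forward orbit---of some critical value $c(\delta)$ under $f_{\Phi,\ba+\delta}$ is not eventually periodic. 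The forward direction uses that the $\omega$-limit of any orbit is contained in the closure of the forward orbits of the critical values, together with the contraction principle that eventually periodic itineraries force convergence to a periodic orbit; the reverse implication is a pigeonhole argument on the finite set of pieces.

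The key quantitative fact is that on each maximal $\delta$-interval on which the length-$i$ itinerary of a critical value $c$ is constant, the map $F_{c,i}(\delta) := f_{\Phi,\ba+\delta}^i(c(\delta))$ is Lipschitz in $\delta$ with constant at most $\lambda^{i+1}$, since $c$ itself is Lipschitz in $\delta$ with constant $\leq \lambda$ and each iterate composes with another contraction. Hence $\delta \mapsto F_{c,i}(\delta) - (a_j + \delta)$ is strictly decreasing on such a piece (with slope at most $\lambda^{i+1} - 1 < 0$), and the sublevel set $\{|F_{c,i}(\delta) - (a_j + \delta)| < \epsilon\}$ meets each piece in an interval of length $\leq 2\epsilon/(1-\lambda)$. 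A second pigeonhole argument, now on length-$L$ itineraries, shows that if the orbit of $c(\delta)$ stays farther than $\lambda^m$ from every discontinuity for more than $k^L$ consecutive iterates (with $\lambda^L \leq \lambda^m$), the orbit must be eventually periodic; consequently for $\delta \in \mathcal{E}$ and each $m$, some critical orbit passes within $\lambda^m$ of some discontinuity at a time $i_m \leq C k^m$.

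This yields covers of $\mathcal{E}$ at scale $\lambda^m$, indexed by the combinatorial data (time $i \leq i_m$, critical value, discontinuity, length-$i$ itinerary piece) of each close approach, with each cell an interval of length $O(\lambda^m)$. The main obstacle is to bound the size of the index set so that for every $s > 0$ the total $s$-Hausdorff content tends to zero. A naive recursion yields only $P_i \leq (1 + 2(k-1)^2)^i$ for the number of length-$i$ itinerary pieces, which would force a positive upper bound on $\dim_H \mathcal{E}$. The technical heart of the proof---analogous to the Stern-Brocot analysis of \cite{LN18} for contracted rotations and the kneading-theoretic covers of \cite{GN22} in the piecewise-increasing case---should be a refined recursion restricted to itinerary pieces consistent with non-eventually-periodic behavior. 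I expect this refinement forces the waiting times $i_m$ to grow super-exponentially in $m$, so that the super-exponentially small scale $\lambda^{i_m}$ overwhelms the exponential per-level combinatorial count and drives the $s$-Hausdorff content to zero for every $s > 0$.
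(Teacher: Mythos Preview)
Your covering framework is the right shape and matches the paper's, but the proof has a genuine gap: you never establish the combinatorial bound that makes the Hausdorff content vanish for every $s>0$. You acknowledge this yourself (``The main obstacle is to bound the size of the index set\ldots''), and the closing speculation does not fill it. In fact the speculation is internally inconsistent: your pigeonhole step produces close approaches at times $i_m \le C k^m$, which is at most exponential in $m$, not super-exponential; and the cover you described has scale $O(\lambda^m)$, not $\lambda^{i_m}$. With an exponential count of itinerary pieces at time $i_m$ and scale $\lambda^m$, the $s$-content blows up rather than going to zero. Also, the pigeonhole claim ``same length-$L$ itinerary plus distance $>\lambda^m$ from discontinuities forces eventual periodicity'' is not justified: two points sharing a length-$L$ itinerary need not be close, so you cannot conclude their forward orbits shadow each other.

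The paper's decisive ingredient is a \emph{subexponential} bound on the number of length-$n$ itineraries (its Lemma~\ref{lem:entropy}): assuming $(\Phi,\ba)$ has no singular connection, for every $\rho>1$ there is $\varepsilon>0$ with $\#\I^{(n)}_{\Phi,\ba}(\varepsilon)\le C\rho^n$. The mechanism is that the finite set $Q^{(m)}_{\ba}$ of $m$-step preimages of the discontinuities has a positive minimum gap $\tau(m)$ (stable under small shifts of $\ba$ by injectivity and absence of singular connections); once $n$ is large enough that every level-$n$ cylinder has $f^n$-image shorter than $\tau(m)$, each cylinder meets $Q^{(m)}$ in at most one point and therefore splits into at most two pieces over the next $m$ steps, giving $\alpha_{n+m}\le 2\alpha_n$ and hence $\alpha_n\le C\,2^{n/m}$. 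Choosing $m$ large makes $2^{1/m}<\rho$. With this, the cover of the bad $\delta$'s by $(k-1)\cdot\#\I^{(n)}_{\Phi,\ba}(\varepsilon)$ intervals of length $O(\lambda^n)$ has $s$-content $\lesssim \rho^n\lambda^{ns}\to 0$ for every $s>0$. Two further points you would need to add: the set of $\delta$ with a singular connection is countable (so can be discarded), and the itinerary count must be controlled uniformly for all $\delta$ in a small interval (the paper's $\I^{(n)}_{\Phi,\ba}(\varepsilon)$), since the cover must work for the whole local exceptional set at once.
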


The injectivity assumption of Theorem~\ref{thmB} can be weakened to the assumption that the functions of the IFS have a finite number of local extrema.  This hypothesis guarantees that the pre-image of any point is a finite set, a crucial property that we use in our arguments to prove Theorem~\ref{thmB}.  By Sard's theorem for Lipschitz functions, the pre-image of almost every point is a finite set. However, this property is not sufficient for our arguments to work in the general situation.  The injectivity of the IFS is a natural condition,  and piecewise contractions appearing in applications satisfy this condition.  Moreover,  assuming the injectivity of the IFS,  the authors in \cite{CCG21} have obtained a spectral decomposition of the attractor of a piecewise contraction,  i.e.,  the attractor is a finite union of periodic orbits together with a finite union of Cantor sets.  Our Theorem~\ref{thmB} excludes the Cantor attractors in a very strong metric sense,  i.e.,  piecewise contractions can only have Cantor attractors for a parameter set of zero Hausdorff dimension. 

As a final remark,  the definition \eqref{def of f} of $f$ at the points $\{a_1,\ldots, a_{k-1}\}$ is not relevant for proving Theorem~\ref{thmB},  i.e.,  the proof of Theorem~\ref{thmB} can be adapted to any other choice of values $f(a_i)\in~\left\{f(a_i^{-}),f(a_i^{+})\right\}$. 

The rest of the paper is organized as follows.  In Section~\ref{prelim} we collect the lemmas that we need to prove Theorem~\ref{thmB} and complete its proof in Section~\ref{proof thmB}.  Then,  in Section~\ref{proof thmA}, we use Theorem~\ref{thmB} to deduce Theorem~\ref{thmA}.

\section{Preliminary results}\label{prelim}

Throughout this section, let $\Phi=\{\phi_1,\ldots,\phi_k\}$  be an injective IFS and $\ba=(a_1,\ldots,a_{k-1})\in\Omega_k$ with $k\geq2$.  Define
$$
\lambda_{\Phi}:=\max_i\, \text{Lip}(\phi_i)\quad\text{and}\quad
r_\Phi :=\frac{1+\lambda_{\Phi}}{1-\lambda_\Phi} \max_i\, |z_i|,
$$
where $\text{Lip}(\phi_i)$ denotes the Lipschitz constant of $\phi_i$ and $z_i$ is the unique fixed point of $\phi_i$. Clearly, $0\leq \lambda_\Phi<1$. 

Our first observation is that the recurrent dynamics of $f_{\Phi,\ba}$ occurs inside an attracting compact interval 
$$
K_{\Phi}:=[-2r_{\Phi},2r_{\Phi}].
$$
Given $n\in\Nn$ and $\omega\in\{1,\ldots,k\}^n$, let $$
\phi_\omega:=\phi_{\omega_n}\circ\cdots\circ\phi_{\omega_1}.
$$  

\begin{lem}\label{lem:compact} The following holds:
\begin{enumerate}
\item $\phi_\omega(K_{\Phi})\subset K_{\Phi}$ for every $\omega\in\{1,\ldots,k\}^n$ and $n\in\Nn$, 
\item $f_{\Phi,\ba}(K_{\Phi})\subset K_{\Phi}$,
\item For every $x\in \Rr$ there is $n\geq0$ such that $f^n_{\Phi,\ba}(x)\in K_{\Phi}$,
\item $\omega(f_{\Phi,\ba},x)\subset K_{\Phi}$ for every $x\in\Rr$.
\end{enumerate}
\end{lem}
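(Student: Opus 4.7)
The plan is to extract a single pointwise estimate on $|\phi_i(x)|$ and then let it drive all four claims. Using that $z_i$ is the fixed point of $\phi_i$ and that $\mathrm{Lip}(\phi_i)\leq \lambda_\Phi$, the triangle inequality together with $|\phi_i(x)-z_i|=|\phi_i(x)-\phi_i(z_i)|\leq\lambda_\Phi|x-z_i|$ yields, for every $x\in\Rr$ and every $i$,
\begin{equation*}
|\phi_i(x)| \leq (1+\lambda_\Phi)\max_j|z_j| + \lambda_\Phi|x|.
\end{equation*}
The definition of $r_\Phi$ is exactly tuned so that $(1+\lambda_\Phi)\max_j|z_j|=(1-\lambda_\Phi)r_\Phi$; this algebraic identity is what makes all of the constants line up in the rest of the proof.

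For part (1), I substitute $|x|\leq 2r_\Phi$ into the above bound. The identity collapses the right-hand side to $(1+\lambda_\Phi)r_\Phi$, which is strictly less than $2r_\Phi$ since $\lambda_\Phi<1$. Hence $\phi_i(K_\Phi)\subset K_\Phi$ for every $i$, and a one-line induction on the length of $\omega$ extends this to every composition $\phi_\omega$. Part (2) then follows at once from (1) applied to words of length one, since $f_{\Phi,\ba}(x)$ equals $\phi_{i(x)}(x)$ for some index $i(x)\in\{1,\ldots,k\}$.

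For part (3), I apply the same pointwise estimate along an arbitrary orbit $x_n:=f^n_{\Phi,\ba}(x)$, obtaining the linear recurrence
\begin{equation*}
|x_{n+1}|\leq \lambda_\Phi|x_n|+(1+\lambda_\Phi)\max_j|z_j|.
\end{equation*}
Summing the geometric series gives $|x_n|\leq \lambda_\Phi^n|x|+r_\Phi$; since $\lambda_\Phi<1$, the first term tends to $0$, so $|x_n|<2r_\Phi$ for all sufficiently large $n$, placing $x_n\in K_\Phi$. Part (4) is then an immediate consequence of (2) and (3): choose $N$ with $x_N\in K_\Phi$, invoke the invariance in (2) to keep $x_n\in K_\Phi$ for every $n\geq N$, and use that $K_\Phi$ is closed to conclude $\omega(f_{\Phi,\ba},x)\subset K_\Phi$.

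I do not anticipate any serious obstacle. The entire argument reduces to the identity $(1+\lambda_\Phi)\max_j|z_j|=(1-\lambda_\Phi)r_\Phi$, and the factor of $2$ built into the definition $K_\Phi=[-2r_\Phi,2r_\Phi]$ is precisely the slack that simultaneously keeps the forward-invariance strict in (1) and absorbs the geometric tail in (3). The injectivity hypothesis on $\Phi$ plays no role in this lemma; it will only enter at a later stage of the proof of Theorem~\ref{thmB}.
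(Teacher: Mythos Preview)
Your proof is correct and follows essentially the same approach as the paper: both extract the pointwise estimate $|\phi_i(x)|\leq \lambda_\Phi|x|+(1-\lambda_\Phi)r_\Phi$ from the triangle inequality and the identity $(1+\lambda_\Phi)\max_j|z_j|=(1-\lambda_\Phi)r_\Phi$, and then feed it into parts (1)--(4) in the same way. The only cosmetic difference is in (3), where the paper bounds the fixed point $z_\omega$ of $\phi_\omega$ by $r_\Phi$ and contracts toward it, while you iterate the one-step bound and sum the geometric series directly; both yield $|x_n|\leq \lambda_\Phi^n|x|+r_\Phi$ and are interchangeable.
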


\begin{proof}
To show (1) let $|x|\leq 2 r_{\Phi}$.  Then
\begin{align*}
|\phi_i(x)| = |\phi_i(x)-z_i+z_i| &\leq \lambda_{\Phi}| x-z_i|+|z_i| \\
&\leq 2\lambda_{\Phi}r_{\Phi}+ |z_i|(1+\lambda_{\Phi})\\
&\leq 2\lambda_{\Phi}r_{\Phi}+ \max_i|z_i|(1+\lambda_{\Phi})\\
&=2\lambda_{\Phi}r_{\Phi} + (1-\lambda_{\Phi})r_{\Phi}\\
&=(1+\lambda_{\Phi})r_{\Phi}\\
&<2r_{\Phi},
\end{align*}
which proves (1).  Item (2) follows immediately from (1).  
Next,  to prove (3),  let $|x|\leq r_{\Phi}$. Repeating the above estimates we get $|\phi_i(x)|\leq r_{\Phi}$.
For any $\omega\in\{1,\ldots,k\}^n$ and $n\in\Nn$,  this shows that $|\phi_{\omega}(x)|\leq r_{\Phi}$ whenever $|x|\leq r_{\Phi}$.  Consequently,  the unique fixed point $z_\omega$ of $\phi_\omega$ satisfies $|z_{\omega}|\leq r_{\Phi}$.  
Now,  given any $x\in\Rr$, choose $n\geq1$ such that $\lambda_{\Phi}^n<\frac{r_{\Phi}}{|x|+r_{\Phi}}$.  Then, for any $\omega\in\{1,\ldots,k\}^n$
\begin{align*}
|\phi_{\omega}(x)| &= |\phi_{\omega}(x) - z_\omega + z_\omega|\\
&\leq \lambda_{\Phi}^{n} |x-z_\omega|+ |z_\omega|\\
&\leq \lambda_{\Phi}^{n}(|x|+r_{\Phi})+r_{\Phi}\\
&<2r_{\Phi}.
\end{align*}
This proves (3).  Finally,  (4) follows immediately from (2) and (3). 
\end{proof}
 
Let
$$
S_{\ba} := \{a_1,\ldots,a_{k-1}\}.
$$
\begin{defn}
We say that the pair $(\Phi,\ba)$ has a \textit{singular connection} if there exist $n\in\Nn$ and $\omega\in\{1,\ldots,k\}^n$ such that
$$\phi_\omega(S_{\ba})\cap S_{\ba}\neq \emptyset.$$
\end{defn}

\begin{lem}\label{lem:sing}
The set
$$ \{\delta\in  \Rr \colon (\Phi,\ba+\delta)\text{ has a singular connection}\}\
$$
is countable.
\end{lem}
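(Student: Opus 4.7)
The plan is to rewrite the singular connection condition as a countable family of one-variable equations in $\delta$, each of which admits at most one solution because the left-hand side is a strict contraction in $\delta$. Concretely, observe that $S_{\ba+\delta} = \{a_1+\delta,\ldots,a_{k-1}+\delta\}$, so $(\Phi,\ba+\delta)$ has a singular connection if and only if there exist $n\in\Nn$, $\omega\in\{1,\ldots,k\}^n$ and $i,j\in\{1,\ldots,k-1\}$ such that
\begin{equation*}
\phi_\omega(a_i+\delta)=a_j+\delta.
\end{equation*}
Thus the set in the lemma is a countable union (over the countably many quadruples $(n,\omega,i,j)$) of the solution sets $Z_{n,\omega,i,j}:=\{\delta\in\Rr:\phi_\omega(a_i+\delta)-(a_j+\delta)=0\}$.

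Next, I would show that each $Z_{n,\omega,i,j}$ has at most one element by a monotonicity argument. Fix such a quadruple and set $h(\delta):=\phi_\omega(a_i+\delta)-(a_j+\delta)$. Since each $\phi_{\omega_\ell}$ is Lipschitz with constant at most $\lambda_\Phi$, the composition $\phi_\omega$ is Lipschitz with constant at most $\lambda_\Phi^n<1$. Hence for $\delta_1<\delta_2$,
\begin{equation*}
h(\delta_2)-h(\delta_1)=\bigl[\phi_\omega(a_i+\delta_2)-\phi_\omega(a_i+\delta_1)\bigr]-(\delta_2-\delta_1)\leq (\lambda_\Phi^n-1)(\delta_2-\delta_1)<0,
\end{equation*}
so $h$ is strictly decreasing and therefore has at most one zero.

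Combining these two observations, the set in question is a countable union of sets each of cardinality at most one, hence is countable, which proves the lemma. There is no serious obstacle here; the only point worth a second of care is to note that the Lipschitz bound on the composition $\phi_\omega$ is $\lambda_\Phi^n$, which being strictly less than $1$ is what forces strict monotonicity of $h$. The injectivity hypothesis of the IFS plays no role in this particular lemma, so I would not invoke it.
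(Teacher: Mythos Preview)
Your proof is correct and follows essentially the same approach as the paper: both decompose the set over the countable index family $(n,\omega,i,j)$ and show each equation $\phi_\omega(a_i+\delta)=a_j+\delta$ has at most one solution because the Lipschitz constant of $\phi_\omega$ is strictly less than $1$. The only cosmetic difference is that the paper phrases this last step as the map $\delta\mapsto\phi_\omega(a_i+\delta)-a_j$ being a contraction with a unique fixed point, whereas you phrase it as $h(\delta)=\phi_\omega(a_i+\delta)-(a_j+\delta)$ being strictly decreasing; these are equivalent observations.
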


\begin{proof}
Given $\omega\in\{1,\ldots,k\}^n$, the map $\phi_\omega$ is a Lipschitz contraction on $\Rr$.  Given $(i,j)\in\{1,\ldots,k-1\}^2$, the map $\Rr\ni\delta\mapsto \phi_\omega(a_i+\delta)-a_j$ is also a Lipschitz contraction on $\Rr$,  hence it has a unique fixed point, say $z_{\omega,i,j}\in\Rr$.   Let
$$
\Delta:=\bigcup_{n\in\Nn}\bigcup_{\omega\in\{1,\ldots,k\}^n}\bigcup_{i=1}^{k-1}\bigcup_{j=1}^{k-1} \{z_{\omega,i,j}\}.
$$
Observe that $(\Phi,\ba+\delta)$ has a singular connection if and only if $\delta \in\Delta$.  Since $\Delta$ is a countable set,  the claim follows.
\end{proof}

\begin{defn}
   Given  $x\in K_{\Phi}$ and $n\in\Nn$,  we say that $x$ is an $n$-\textit{regular point of $(\Phi,\ba)$} if $f_{\Phi,\ba}^j(x)\notin S_{\ba}$ for every $0\leq j<n$.  
\end{defn}

Let $D^{(0)}_{\ba} := S_{\ba}$,  $D^{(n)}_{\ba} :=f_{\Phi,\ba}^{-1}(D^{(n-1)}_{\ba})$ for $n\geq1$ and
$$
Q^{(n)}_{\ba}:=\bigcup_{i=0}^{n-1} D^{(i)}_{\ba},\quad n\in\Nn.
$$
Notice that the sets $\{Q^{(n)}_{\ba}\}_{n\geq0}$ are finite. Indeed, this follows from the fact that $S_{\ba}$ is finite and $\Phi$ is injective.  
Given $x\in K_{\Phi}$,  it is also clear that $x$ is an $n$-regular point of $(\Phi,\ba)$ if and only if $x\notin Q^{(n)}_{\ba}$.

Let $X_1:=(-\infty,a_1)$, $X_i:=(a_{i-1},a_{i})$ with $i=2,\ldots, k-1$, and $X_k:=(a_{k-1},+\infty)$.  By construction,  these  open intervals are disjoint and their union equals $\Rr\setminus S_{\ba}$.

\begin{defn}
Given $n\in\Nn$, a tuple $(\omega_0,\omega_1,\ldots,\omega_{n-1})\in\{1,\ldots,k\}^{n}$ is called an \textit{itinerary of order $n$ of $(\Phi,\ba)$} if there is an $n$-regular point $x$ of $(\Phi,\ba)$ such that $f^j_{\Phi,\ba}(x)\in X_{\omega_j}$ for every $0\leq j<n$.   
\end{defn}

We define the set of all itineraries of order $n$ of $(\Phi,\ba)$,  
$$
\I_{\Phi,\ba}^{(n)}:=\left\{\omega\in\{1,\ldots,k\}^n\colon \omega \text{ is an itinerary of order $n$ of $(\Phi,\ba)$}\right\}.
$$
The set $\I_{\Phi,\ba}^{(n)}$ is in a one-to-one correspondence with the set of  connected components of $K_{\Phi}\setminus Q^{(n)}_{\ba}$. Indeed, for each connected component $J$ of $K_{\Phi}\setminus Q^{(n)}_{\ba}$, all points in $J$ are $n$-regular points of $(\Phi,\ba)$ and because the  sets $J, f_{\Phi,\ba}(J), \ldots, f^{n-1}_{\Phi,\ba}(J)$ are subintervals of $K_{\Phi}$ not intersecting $S_{\ba}$, we conclude that there is an itinerary of order $n$ of $(\Phi,\ba)$, say $(\omega_0,\omega_1,\ldots,\omega_{n-1})\in\I_{\Phi,\ba}^{(n)}$,  such that $f^{j}_{\Phi,\ba}(J)\subset X_{\omega_j}$ for every $0\leq j<n$.

Given $\varepsilon\geq 0$, we enlarge the set $\I_{\Phi,\ba}^{(n)}$ as follows,
$$
\I^{(n)}_{\Phi,\ba}(\varepsilon):=\bigcup_{|\delta|\leq \varepsilon}\I^{(n)}_{\Phi,\ba+\delta}.
$$
The following result establishes that the number of itineraries grows subexponentially,  a crucial property to prove Theorem~\ref{thmB}.

\begin{lem}\label{lem:entropy}Suppose that $(\Phi,\ba)$ has no singular connections.  Then 
$$
\lim_{\varepsilon\to0^+}\limsup_{n\to+\infty}\frac{1}{n}\log\#\I^{(n)}_{\Phi,\ba}(\varepsilon) = 0.
$$
\end{lem}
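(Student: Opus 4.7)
My plan is to overcount $\#\I^{(n)}_{\Phi,\ba}(\varepsilon)$ by the number of connected components of a suitable $\varepsilon$-thickening of the preimage set $Q^{(n)}_\ba$, and then to control this count using the contraction of the $\phi_i$'s together with the no-singular-connection hypothesis.

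Since $|\delta|\leq\varepsilon$ implies $X^{(\ba+\delta)}_i:=[a_{i-1}+\delta,a_i+\delta)\subset\widetilde{X}^\varepsilon_i:=[a_{i-1}-\varepsilon,a_i+\varepsilon]$, every itinerary $\omega\in\I^{(n)}_{\Phi,\ba+\delta}$ (for some $|\delta|\leq\varepsilon$) satisfies the weaker condition that there exists $x\in K_\Phi$ with $\phi_{\omega_{j-1}}\circ\cdots\circ\phi_{\omega_0}(x)\in\widetilde{X}^\varepsilon_{\omega_j}$ for every $0\leq j<n$. This yields the over-estimate
$$\#\I^{(n)}_{\Phi,\ba}(\varepsilon)\leq 1+\#\bigl\{\text{connected components of }\widetilde Q^{(n)}_{\ba,\varepsilon}\cap K_\Phi\bigr\},$$
where $\widetilde Q^{(n)}_{\ba,\varepsilon}:=\bigcup_{j=0}^{n-1}\bigcup_{\omega\in\{1,\ldots,k\}^j}\phi_\omega^{-1}(U_\varepsilon)\cap K_\Phi$ and $U_\varepsilon:=\bigcup_{i=1}^{k-1}[a_i-\varepsilon,a_i+\varepsilon]$ is the $\varepsilon$-neighborhood of $S_\ba$.

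I would then estimate these components inductively. A new component is added at step $j+1$ precisely when some $\omega$ at level $j$ has image $\phi_\omega(J_\omega)$ meeting $U_\varepsilon$. By contraction, $\mathrm{diam}\,\phi_\omega(J_\omega)\leq\lambda_\Phi^j|K_\Phi|$, so once $\lambda_\Phi^j|K_\Phi|<\varepsilon$ each such image intersects at most $O(1)$ components of $U_\varepsilon$. The key estimate is then to bound, for each $j$, the number of length-$j$ words $\omega$ whose image $\phi_\omega(J_\omega)$ falls within $\varepsilon$ of $S_\ba$.

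This is where the no-singular-connection hypothesis enters. Without it, $\phi_\omega(a_i)=a_j$ for some $\omega$ would produce cascading branching in the itinerary tree with exponential multiplicity. Under the hypothesis, $\phi_\omega(a_i)\neq a_j$ for all $\omega$ and all $i,j$, and I would aim to show that the above count grows subexponentially in $j$, with the rate tending to zero as $\varepsilon\to 0^+$. Combined with the trivial bound $\sum_j(\text{new components at step }j)$, this would give $\limsup_{n\to\infty}\frac{1}{n}\log\#\I^{(n)}_{\Phi,\ba}(\varepsilon)\to 0$ as $\varepsilon\to 0^+$.

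The main obstacle will be to quantify precisely the role of the no-singular-connection hypothesis. A promising route is to analyze the closure of the forward orbits $\{\phi_\omega(a_i):\omega,i\}$ under $\Phi$: by the hypothesis this closure avoids $S_\ba$ pointwise, and for sufficiently small $\varepsilon$ the only length-$j$ itineraries whose image lands within $\varepsilon$ of $S_\ba$ should be those that shadow these orbits, of which there are only finitely many at each scale by contraction. Turning this heuristic into a uniform-in-$n$ bound is the technical heart of the proof.
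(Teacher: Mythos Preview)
Your proposal has a genuine gap at precisely the step you flag as the main obstacle, and the suggested route does not work. The heuristic that the closure of $\{\phi_\omega(a_i):\omega,i\}$ ``avoids $S_\ba$'' is false in general: as $|\omega|\to\infty$ these points accumulate on the full attractor of the IFS $\Phi$, and the no-singular-connection hypothesis says nothing about whether that attractor meets $S_\ba$. Consequently, for any fixed $\varepsilon>0$ there may be, at every level $j$, a \emph{positive proportion} of the current words $\omega$ with $\phi_\omega(J_\omega)\cap U_\varepsilon\neq\emptyset$, and your one-step recursion then yields genuinely exponential growth with a rate that does not tend to $0$ as $\varepsilon\to0^+$. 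The difficulty is circular: bounding how many level-$j$ words land near $S_\ba$ is essentially the same problem as bounding $\#\I^{(j)}$ itself. (A secondary issue: your over-estimate via $\widetilde Q^{(n)}_{\ba,\varepsilon}$ takes the union over \emph{all} $\omega\in\{1,\ldots,k\}^j$, not just itineraries, so a priori it contains exponentially many preimage intervals; and the passage from ``weak itineraries'' to components of $\widetilde Q^{(n)}_{\ba,\varepsilon}$ is not justified, since weak itineraries can branch inside the overlap regions $\widetilde X^\varepsilon_i\cap\widetilde X^\varepsilon_{i+1}$.)

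The paper avoids the one-step count entirely by working in blocks. Fix $\rho>1$ and set $m=\lceil\log 2/\log\rho\rceil$. The no-singular-connection hypothesis is used only to show that the finitely many levels $D^{(0)}_\ba,\ldots,D^{(m-1)}_\ba$ are pairwise disjoint; together with injectivity this makes $Q^{(m)}_{\ba+\delta}$ a finite set of constant cardinality with minimum gap $\tau(m,\ba+\delta)\geq\tau(m)>0$ for all $|\delta|<\varepsilon_0(m)$. Once $n$ is so large that every continuity interval $J$ of $K_\Phi\setminus Q^{(n)}_{\ba+\delta}$ satisfies $|f^n(J)|\leq 4\lambda_\Phi^n r_\Phi<\tau(m)$, the image $f^n(J)$ meets $Q^{(m)}_{\ba+\delta}$ in at most one point, so $m$ further iterates split $J$ into at most two pieces. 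This gives the recursion $\alpha_{n+m}(\varepsilon)\leq 2\alpha_n(\varepsilon)$ for all $n\geq n_0$ and $\varepsilon<\varepsilon_0$, hence growth rate at most $(\log 2)/m<\log\rho$. The key idea missing from your plan is this block structure: the hypothesis is used to control a single \emph{finite} preimage set $Q^{(m)}$ uniformly in $\delta$, not the infinite forward-orbit set $\{\phi_\omega(a_i)\}$.
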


\begin{proof}
Given $\rho>1$, let $m=\lceil \log 2/\log \rho\rceil$ and $\tau(m,\ba)>0$ be the minimum distance between any pair of distinct points of $Q^{(m)}_{\ba}$. 
Notice that the sets $\{D^{(n)}_{\ba}\}_{n\geq0}$ are pairwise disjoint. Indeed, suppose that there is $x\in D^{(n_1)}_{\ba}\cap D^{(n_2)}_{\ba}$ with $n_2>n_1\geq0$. Then $f_{\Phi,\ba}^{n_1}(x)=a_i$ and $f_{\Phi,\ba}^{n_2}(x)=a_j$ for some $i,j\in\{1,\ldots,k-1\}$. This implies that $f_{\Phi,\ba}^{n_2-n_1}(a_i)=a_j$,  contradicting the assumption that $(\Phi,\ba)$ has no singular connections.  Since
$
Q^{(m)}_{\ba}=D^{(0)}_{\ba}\cup D^{(1)}_{\ba}\cup\cdots\cup D^{(m-1)}_{\ba}
$
and $\Phi$ is injective, there is an $\varepsilon_0=\varepsilon_0(m)>0$ such that the set-valued map $(-\varepsilon_0,\varepsilon_0)\ni \delta \mapsto Q^{(m)}_{\ba+\delta}$ varies continuously in the Hausdorff metric of compact subsets of $\Rr$, and for every $|\delta|<\varepsilon_0$,  the set $Q^{(m)}_{\ba+\delta}$ has the same number of elements of $Q^{(m)}_{\ba}$ and
$$
\tau(m):=\inf_{ |\delta|<\varepsilon_0}\tau(m,\ba+\delta)>0.
$$
Let $\alpha_n(\varepsilon) := \# \I^{(n)}_{\Phi,\ba}(\varepsilon)$ with $0\leq \varepsilon<\varepsilon_0$.  Clearly,  $\alpha_m(\varepsilon)=\alpha_m(0)=\#\I_{\Phi,\ba}^{(m)}$.  
Now,  choose $n_0\geq0$ sufficiently large so that for every $|\delta|<\varepsilon_0$, every $n\geq n_0$  and every connected component $J$ of $K_{\Phi}\setminus Q^{(n)}_{\ba+\delta}$,  the length of the interval  $f^n_{\Phi,\ba+\delta}(J)$ is smaller than $\tau(m)$.  Notice that this is possible since $|f^n_{\Phi,\ba+\delta}(J)|\leq \lambda_{\Phi}^n| J|\leq 4\lambda_{\Phi}^n r_{\Phi} $.  
Since, for every $|\delta|<\varepsilon_0$, any interval $J\subset K_{\Phi}$ with length $<\tau(m)$ will intersect $Q^{(m)}_{\ba+\delta}$ in at most a single point, we conclude that 
$$
\alpha_{n+m}(\varepsilon)\leq 2\alpha_{n}(\varepsilon),\quad \forall\,n\geq n_0.
$$ 
Thus, $\alpha_{n }(\varepsilon)\leq 2^{\frac{n-n_0}{m}} \alpha_{n_0}(\varepsilon)$ for every $n\geq n_0$. By our choice of $m$, we get $\alpha_{n }(\varepsilon)\leq C \rho^n$ for every $n\geq n_0$ where $C:= 2^{-n_0/m}\alpha_{n_0}(\varepsilon_0)$. This shows that 
$$
\limsup_{n\to\infty}\frac{\alpha_{n }(\varepsilon)}{n}\leq \log\rho,\quad \forall\, \varepsilon\in[0,\varepsilon_0).
$$
As $\rho>1$ is arbitrary,  the claim follows.
\end{proof}

Let 
$$
Q_{\ba}:=\left(\bigcup_{n\geq 0}Q^{(n)}_{\ba}\right)\cap K_{\Phi}.
$$

The following result is adapted from the results of \cite{NPR18} (cf. \cite[Theorem 20]{GN22}).  We include here a proof for the convenience of the reader. 

\begin{lem}\label{lem:finite}
Suppose that $(\Phi,\ba)$ has no singular connections. If $Q_{\ba}$ is finite, then $f_{\Phi,\ba}$ is asymptotically periodic. 
\end{lem}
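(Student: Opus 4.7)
The plan is that under the finiteness of $Q_{\ba}$, the dynamics on $K_{\Phi}$ reduces to a finite combinatorial system. Since $Q_{\ba}$ is finite, $K_{\Phi}\setminus Q_{\ba}$ is a finite disjoint union of intervals $J_1,\ldots,J_N$, and since $S_{\ba}\cap K_{\Phi}\subset Q_{\ba}$ is disjoint from each $J_i$, each $J_i$ sits inside a single $X_{\ell(i)}$, so $f|_{J_i}=\phi_{\ell(i)}|_{J_i}$ is continuous. Next I would check that $f$ carries each $J_i$ into some $J_{\sigma(i)}$: the image lies in $K_{\Phi}$ by Lemma~\ref{lem:compact}(2) and is disjoint from $Q_{\ba}$ (if $f(x)\in Q^{(m)}_{\ba}$ for some $x\in J_i$, then $x\in Q^{(m+1)}_{\ba}\cap K_{\Phi}\subset Q_{\ba}$, contradicting $x\in J_i$); since $f(J_i)$ is connected, it lies in a single component. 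The induced self-map $\sigma:\{1,\ldots,N\}\to\{1,\ldots,N\}$ has every orbit eventually periodic, so only finitely many cycles.

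Fix a cycle $J_{i_0}\to J_{i_1}\to\cdots\to J_{i_{p-1}}\to J_{i_0}$ with itinerary $\omega=(\omega_0,\ldots,\omega_{p-1})$, so that $f^p|_{J_{i_0}}=\phi_\omega$, a Lipschitz contraction on $\Rr$ with unique fixed point $z_\omega$. The orbit $f^{np}(x)\in J_{i_0}$ converges to $z_\omega$ for every $x\in J_{i_0}$, forcing $z_\omega\in\overline{J_{i_0}}$. The main obstacle, and the step where the no-singular-connection assumption is essential, is promoting this fixed point of the continuous extension into a periodic orbit of the discontinuous map $f$, especially when $z_\omega$ sits on $\partial J_{i_0}\subset Q_{\ba}$. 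Setting $y_j:=\phi_{\omega_{j-1}}\circ\cdots\circ\phi_{\omega_0}(z_\omega)$, I would show that no $y_j$ can lie in $S_{\ba}$: if $y_J=a_\ell\in S_{\ba}$, applying $\phi_{\omega_{J-1}}\circ\cdots\circ\phi_{\omega_0}$ to the identity $\phi_{\omega_{p-1}}\circ\cdots\circ\phi_{\omega_J}(a_\ell)=z_\omega$ produces a composition $\psi$ with $\psi(a_\ell)=a_\ell$, a singular connection, contradicting the hypothesis. Since $y_j\in\overline{J_{i_j}}\subset\overline{X_{\omega_j}}$ and $y_j\notin S_{\ba}$, it lies in the open interval $X_{\omega_j}$, so inductively $f^j(z_\omega)=y_j$ and in particular $f^p(z_\omega)=z_\omega$, giving a genuine periodic orbit $\{z_\omega,f(z_\omega),\ldots,f^{p-1}(z_\omega)\}$ of period dividing $p$.

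Finally I would harvest asymptotic periodicity by continuity: for $x\in J_{i_0}$, $f^{np+j}(x)=\phi_{\omega_{j-1}}\circ\cdots\circ\phi_{\omega_0}(f^{np}(x))\to f^j(z_\omega)$, so $\omega(f_{\Phi,\ba},x)$ is precisely this periodic orbit. Points in non-cyclic components enter a cycle in finitely many steps and reduce to the previous case; points of $Q_{\ba}$ either exit $Q_{\ba}$ into some $J_i$, again reducing to the previous case, or stay in the finite set $Q_{\ba}$ forever and are therefore eventually periodic by pigeonhole; points outside $K_{\Phi}$ enter $K_{\Phi}$ in bounded time by Lemma~\ref{lem:compact}(3). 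Since the number of $\sigma$-cycles and the cardinality of $Q_{\ba}$ are both finite, $f_{\Phi,\ba}$ admits only finitely many periodic orbits and is asymptotically periodic.
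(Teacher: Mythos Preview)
Your proof is correct and follows essentially the same route as the paper: partition $K_\Phi\setminus Q_{\ba}$ into finitely many intervals, build the induced self-map $\sigma$, extract a periodic point from each $\sigma$-cycle via the fixed point $z_\omega$ of $\phi_\omega$, and use the no-singular-connection hypothesis to ensure this fixed point is a genuine periodic point of $f$. The only cosmetic differences are that you spell out the singular-connection contradiction more explicitly (tracking the $y_j$'s and producing $\psi(a_\ell)=a_\ell$), whereas the paper simply asserts $z_\omega\in J_{\ell_q}$; and that you allow orbits to remain in $Q_{\ba}$ forever and handle them by pigeonhole, whereas the paper observes this case is in fact excluded by the no-singular-connection hypothesis.
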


\begin{proof}
To simplify the notation, let $f=f_{\Phi,\ba}$. 
Let $\mathcal{P}=\{J_\ell\}_{\ell=1}^m$ denote the collection of connected components of $K_{\Phi}\setminus Q_{\ba}$.  This collection is finite by hypothesis.  Notice that there is a map $\sigma\colon \{1,\ldots,m\}\to\{1,\ldots,m\}$ such that $f(J_\ell)\subset J_{\sigma(\ell)}$ for every $\ell\in\{1,\ldots,m\}$. Indeed,  suppose by contradiction that there is $J_\ell\in\mathcal{P}$ such that $f(J_\ell)\cap Q_{\ba}\neq\emptyset$.   Then $J_\ell\cap f^{-1}
(Q_{\ba})\neq \emptyset$.  But $f^{-1}(Q_{\ba})\cap K_{\Phi}\subset Q_{\ba}$, which implies that $J_\ell\cap Q_{\ba}\neq\emptyset$, thus a contradiction. 

Now we show that $\omega(f,x)$ is a periodic orbit for every $x\in \Rr$.  By Lemma~\ref{lem:compact},   we may assume that $x\in K_{\Phi}$.  We split the proof in two cases:
\begin{enumerate}
\item When $x\in K_{\Phi}\setminus Q_{\ba}$,  then $x\in J_{\ell_0}$ for some $\ell_0\in\{1,\ldots,m\}$.  Thus,  $f^n(x)\in J_{\ell_n}$ for every $n\geq0$ where $(\ell_n)_{n\geq0}$ is the sequence in $\{1,\ldots,m\}$ defined by $\ell_{n+1}=\sigma(\ell_n)$ for every $n\geq0$.  Clearly,  the sequence $(\ell_n)_{n\geq0}$ is eventually periodic,  i.e., there must exist $q\geq0$ and $p\geq 1$ such that $\ell_{n+p}=\ell_n$ for every $n\geq q$.  We assume that $p$ is the smallest positive integer with that property, i.e.,  $p$ is the period.  In particular,  we have  $f^p(J_{\ell_{q}})\subset J_{\ell_{q}}$. Let  $\omega\in\{1,\ldots, k\}^p$ such that $f^p|_{J_{\ell_{q}}} = \phi_\omega|_{J_{\ell_{q}}} $.  Then,  $f^{np+q}(x)\to z_\omega$ as $n\to\infty$ where  $z_{\omega}$ is the unique fixed point of $\phi_\omega$ which belongs to the closure of the interval $J_{\ell_{q}}$.  By hypothesis,  $(\Phi,\ba)$ has no singular connections, which implies that $z_\omega \in J_{\ell_q}$,  and thus $z_\omega$ is a periodic point of $f$ of period $p$.  Therefore, $\omega(f,x)=\{z_\omega,f(z_\omega),\ldots,f^{p-1}(z_\omega)\}$.
\item When $x\in Q_{\ba}$,  then two situations can happen.  Either the forward orbit of $x$ under $f$ belongs to $Q_{\ba}$ and thus it is periodic or the forward orbit of $x$ under $f$ eventually leaves $Q_{\ba}$.  The former situation cannot happen,  because $(\Phi,\ba)$ has no singular connections.  Thus,  there exists $n_0\geq1$ such that $y:=f^{n_0}(x)\in K_{\Phi}\setminus Q_{\ba}$.  But then $\omega(f,x)=\omega(f,y)$ and we know from the previous case that $\omega(f,y)$ is a periodic orbit. 
\end{enumerate}
At last,  because $\mathcal{P}$ is finite,  $f$ has only a finite number of periodic orbits and the claim follows. 
\end{proof}

Given $\varepsilon\geq 0$, let 
$$
\Omega^\varepsilon_{\Phi,\ba}:=\bigcap_{m\geq1}\overline{\bigcup_{n\geq m}\bigcup_{\omega\in \I^{(n)}_{\Phi,\ba}(\varepsilon)}\{\phi_\omega(0)\}}.
$$
Notice that $\Omega^\varepsilon_{\Phi,\ba}\subset K_{\Phi}$ and $\Omega^\varepsilon_{\Phi,\ba}$ is compact (see Lemma~\ref{lem:compact}). 
\begin{lem}\label{lem:Omega cover}
For every $n\in\Nn$, the set $\Omega^\varepsilon_{\Phi,\ba}$ can be covered by finitely many intervals of length $2(1+2r_\Phi)\lambda_\Phi^n$ centered at the points $\phi_\omega(0)$ with $\omega\in \I^{(n)}_{\Phi,\ba}(\varepsilon)$.
\end{lem}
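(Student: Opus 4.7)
The plan is to show that every point of $\Omega^\varepsilon_{\Phi,\ba}$ lies within distance $(1+2r_\Phi)\lambda_\Phi^n$ of some $\phi_{\omega^*}(0)$ with $\omega^* \in \I^{(n)}_{\Phi,\ba}(\varepsilon)$. Since $\I^{(n)}_{\Phi,\ba}(\varepsilon) \subset \{1,\ldots,k\}^n$ is finite, this immediately gives the desired finite cover by closed intervals of length $2(1+2r_\Phi)\lambda_\Phi^n$ centered at the points $\phi_{\omega^*}(0)$.

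The first step is a shift-invariance property of itineraries: if $\omega = (\omega_0, \ldots, \omega_{N-1}) \in \I^{(N)}_{\Phi,\ba+\delta}$ with $N \geq n$ and $|\delta| \leq \varepsilon$, then the tail $\omega^* := (\omega_{N-n}, \ldots, \omega_{N-1})$ belongs to $\I^{(n)}_{\Phi,\ba+\delta} \subset \I^{(n)}_{\Phi,\ba}(\varepsilon)$. Indeed, given an $N$-regular point $x$ realizing $\omega$, the iterate $y := f^{N-n}_{\Phi,\ba+\delta}(x)$ lies in $K_{\Phi}$ by Lemma~\ref{lem:compact}(2), is $n$-regular (since the relevant orbit segment is a subset of that of $x$), and satisfies $f^j_{\Phi,\ba+\delta}(y) = f^{N-n+j}_{\Phi,\ba+\delta}(x) \in X_{\omega_{N-n+j}}$ for every $0 \leq j < n$.

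The second step is an elementary Lipschitz estimate. Writing $\omega$ as the concatenation $\omega^{**} \cdot \omega^*$ with $\omega^{**} = (\omega_0, \ldots, \omega_{N-n-1})$, the definition of $\phi_\omega$ gives $\phi_\omega = \phi_{\omega^*} \circ \phi_{\omega^{**}}$. Setting $q := \phi_{\omega^{**}}(0)$, Lemma~\ref{lem:compact}(1) combined with $0 \in K_{\Phi}$ ensures $q \in K_{\Phi}$, so $|q| \leq 2r_{\Phi}$. Since $\phi_{\omega^*}$ has Lipschitz constant at most $\lambda_{\Phi}^n$,
$$|\phi_\omega(0) - \phi_{\omega^*}(0)| = |\phi_{\omega^*}(q) - \phi_{\omega^*}(0)| \leq 2r_{\Phi}\,\lambda_{\Phi}^n.$$

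Combining these two observations: for every $N > n$, the set $A_N := \{\phi_\omega(0) : \omega \in \I^{(N)}_{\Phi,\ba}(\varepsilon)\}$ is contained in the finite union $U_n$ of the closed intervals of length $2(1+2r_{\Phi})\lambda_{\Phi}^n$ centered at the points $\{\phi_{\omega^*}(0) : \omega^* \in \I^{(n)}_{\Phi,\ba}(\varepsilon)\}$, with the Lipschitz estimate $2r_\Phi\lambda_\Phi^n$ strictly smaller than the required radius $(1+2r_\Phi)\lambda_\Phi^n$. Since $U_n$ is closed, $\overline{\bigcup_{N > n} A_N} \subset U_n$; taking $m = n+1$ in the definition of $\Omega^\varepsilon_{\Phi,\ba}$ yields $\Omega^\varepsilon_{\Phi,\ba} \subset U_n$, which is the claim. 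I do not foresee a substantial obstacle — the only subtlety is keeping the indexing straight (recognizing that in the composition $\phi_\omega = \phi_{\omega_n} \circ \cdots \circ \phi_{\omega_1}$ it is the \emph{last} $n$ letters that form the outer contraction $\phi_{\omega^*}$), so that the factor $\lambda_{\Phi}^n$ comes out of the right factor and multiplies a bounded quantity $|q| \leq 2r_{\Phi}$ from $K_\Phi$.
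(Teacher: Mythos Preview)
Your proof is correct and follows essentially the same approach as the paper: both extract the last $n$ letters $\omega^*$ of a longer itinerary $\omega$, observe that $\omega^*\in\I^{(n)}_{\Phi,\ba}(\varepsilon)$, and use the Lipschitz bound $|\phi_\omega(0)-\phi_{\omega^*}(0)|\leq 2r_\Phi\lambda_\Phi^n$ coming from Lemma~\ref{lem:compact}. The only cosmetic difference is that the paper argues pointwise (pick $x\in\Omega^\varepsilon_{\Phi,\ba}$, approximate by $\phi_{\omega^{(j)}}(0)$ to within $\lambda_\Phi^n$, then apply the triangle inequality), whereas you argue set-theoretically via the closedness of $U_n$; your packaging is in fact slightly cleaner and shows the extra ``$+1$'' in the radius is not needed.
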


\begin{proof}
Let $x\in \Omega^\varepsilon_{\Phi,\ba}$ and $n\in\Nn$.  By definition of $\Omega^\varepsilon_{\Phi,\ba}$, there is an increasing sequence of positive integers $m_j\to\infty$ and a sequence $\omega^{(j)}\in \I^{(m_j)}_{\Phi,\ba}(\varepsilon)$ such that $\phi_{\omega^{(j)}}(0)\to x$ as $j\to\infty$. Let $j\geq1$ be sufficiently large such that $m_j\geq n$ and $|x-\phi_{\omega^{(j)}}(0)|\leq \lambda_\Phi^n$. Denote by $\omega^{(j,n)}=(\omega^{(j)}_{m_j-n+1},\ldots,\omega^{(j)}_{m_j})\in \I^{(n)}_{\Phi,\ba}(\varepsilon) $ the last $n$ entries of $\omega_j$. Then
$$
|\phi_{\omega^{(j)}}(0)-\phi_{\omega^{(j,n)}}(0)| =|\phi_{\omega^{(j,n)}}(y)-\phi_{\omega^{(j,n)}}(0)|
\leq \lambda_\Phi^n |y| 
\leq 2r_\Phi \lambda_\Phi^n,
$$
where $y:=\phi_{\omega^{(j)}_{m_j-n}}\circ\cdots\circ\phi_{\omega^{(j)}_{1}}(0)$ and,  by Lemma~\ref{lem:compact},  $|y|\leq 2r_{\Phi}$. Thus,
\begin{align*}
|x-\phi_{\omega^{(j,n)}}(0)|&\leq |x-\phi_{\omega^{(j)}}(0)|+|\phi_{\omega^{(j)}}(0)-\phi_{\omega^{(j,n)}}(0)|\\
& \leq (1+2r_\Phi)\lambda_\Phi^n. 
\end{align*}
\end{proof}

\begin{lem}\label{lem:Omega}
Let $|\delta|<\varepsilon$. If $\Omega^\varepsilon_{\Phi,\ba}\cap S_{\ba+\delta}=\emptyset$, then $Q_{\ba+\delta}$ is finite.
\end{lem}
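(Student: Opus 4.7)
The plan is to argue by contrapositive: assume that $Q_{\ba+\delta}$ is infinite and exhibit a point in $\Omega^\varepsilon_{\Phi,\ba}\cap S_{\ba+\delta}$.

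First I would record that every set $Q^{(n)}_{\ba+\delta}$ is finite. By the injectivity of the IFS, on each $X_\ell$ the map $f_{\Phi,\ba+\delta}$ agrees with the injective contraction $\phi_\ell$, so $f^{-1}_{\Phi,\ba+\delta}$ enlarges cardinality by at most a factor of $k$ and $\#D^{(n)}_{\ba+\delta}\le k^{n}(k-1)$. Since
$$
Q_{\ba+\delta}=\bigcup_{n\ge 0}\left(Q^{(n)}_{\ba+\delta}\cap K_{\Phi}\right)
$$
is infinite while each term of this nondecreasing chain of sets is finite, the cardinalities $\#(Q^{(n)}_{\ba+\delta}\cap K_{\Phi})$ are unbounded in $n$. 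Using $Q^{(n+1)}_{\ba+\delta}=Q^{(n)}_{\ba+\delta}\cup D^{(n)}_{\ba+\delta}$ to locate the new points, one can therefore select $n_k\to\infty$ and points
$$
x_{n_k}\in \left(D^{(n_k)}_{\ba+\delta}\setminus Q^{(n_k)}_{\ba+\delta}\right)\cap K_{\Phi}.
$$

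Next I would extract itineraries. Because $x_{n_k}\notin Q^{(n_k)}_{\ba+\delta}$, the point $x_{n_k}$ is $n_k$-regular, so there is a unique $\omega^{(k)}\in \I^{(n_k)}_{\Phi,\ba+\delta}$ with $f^j_{\Phi,\ba+\delta}(x_{n_k})\in X_{\omega^{(k)}_j}$ for $0\le j<n_k$, whence $f^{n_k}_{\Phi,\ba+\delta}(x_{n_k}) = \phi_{\omega^{(k)}}(x_{n_k})$. Since $x_{n_k}\in D^{(n_k)}_{\ba+\delta}$ this value lies in $S_{\ba+\delta}$; write it $a_{j_k}+\delta$ with $j_k\in\{1,\ldots,k-1\}$. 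The hypothesis $|\delta|<\varepsilon$ gives $\omega^{(k)}\in\I^{(n_k)}_{\Phi,\ba+\delta}\subset \I^{(n_k)}_{\Phi,\ba}(\varepsilon)$.

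The final step is a contraction estimate followed by pigeonhole. Using $x_{n_k}\in K_{\Phi}$, so $|x_{n_k}|\le 2r_\Phi$, we have
$$
\left|\phi_{\omega^{(k)}}(0)-(a_{j_k}+\delta)\right|
= \left|\phi_{\omega^{(k)}}(0)-\phi_{\omega^{(k)}}(x_{n_k})\right|
\le \lambda_\Phi^{n_k}|x_{n_k}|
\le 2 r_\Phi \lambda_\Phi^{n_k},
$$
which tends to $0$ as $k\to\infty$. Because $j_k\in\{1,\ldots,k-1\}$ takes only finitely many values, pigeonhole yields a subsequence on which $j_k\equiv j$ is constant; then $\phi_{\omega^{(k)}}(0)\to a_j+\delta$ with $n_k\to\infty$ and $\omega^{(k)}\in\I^{(n_k)}_{\Phi,\ba}(\varepsilon)$, so the definition of $\Omega^\varepsilon_{\Phi,\ba}$ places $a_j+\delta$ inside $\Omega^\varepsilon_{\Phi,\ba}\cap S_{\ba+\delta}$, contradicting the hypothesis. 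The only delicate point is ensuring that the new points can be picked inside $K_{\Phi}$ so that the uniform bound $|x_{n_k}|\le 2r_\Phi$ is available; this is precisely the reason $K_{\Phi}$ appears explicitly in the definition of $Q_{\ba+\delta}$.
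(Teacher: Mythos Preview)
Your argument is correct and follows essentially the same route as the paper's proof: both pick $x_n\in (Q^{(n+1)}_{\ba+\delta}\setminus Q^{(n)}_{\ba+\delta})\cap K_{\Phi}$, read off the order-$n$ itinerary $\omega^{(n)}\in\I^{(n)}_{\Phi,\ba}(\varepsilon)$, and use the contraction estimate $|\phi_{\omega^{(n)}}(0)-(a_{j_n}+\delta)|\le 2r_\Phi\lambda_\Phi^{n}$ to force a point of $S_{\ba+\delta}$ into $\Omega^\varepsilon_{\Phi,\ba}$. The only differences are cosmetic---the paper packages the hypothesis as a uniform separation $\tau>0$ rather than your explicit pigeonhole-and-limit step---and one notational quibble: your subsequence index $k$ collides with the number $k$ of branches of the IFS.
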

\begin{proof}
By hypothesis, there is $\tau>0$ such that
\begin{equation}\label{eq:hyp separation}
\min_{1\leq i<k}| a_i+\delta -\phi_{\omega}(0)|\geq \tau,\quad \forall\,\omega\in\I_{\Phi,\ba}^{(n)}(\varepsilon),\, n\in\Nn.
\end{equation}
Suppose, by contradiction,  that $Q_{\ba+\delta}$ is not finite.  Then,  
$$
Q_{\ba+\delta}^{(1)}\cap K_{\Phi}\varsubsetneq Q_{\ba+\delta}^{(2)}\cap K_{\Phi}\ \varsubsetneq Q_{\ba+\delta}^{(3)} \cap K_{\Phi}\varsubsetneq  \cdots
$$
So,  we can pick a sequence $(x_n)_{n\geq1}$ in $Q_{\ba+\delta}$ having the property that $x_n\in Q_{\ba+\delta}^{(n+1)}\cap K_{\Phi}\setminus Q_{\ba+\delta}^{(n)}$ for every $n\in\Nn$.  Thus,  $x_n$ is an $n$-regular point of $(\Phi,\ba+\delta)$ and $f^{n}_{\Phi,\ba+\delta}(x_n)=a_{j_n}+\delta$ for some $j_n\in \{1,\ldots,k-1\}$.  Let $\omega^{(n)}\in \I_{\Phi,\ba}^{(n)}(\varepsilon)$ denote the itinerary of order $n$ of $(\Phi,\ba+\delta)$ associated to $x_n$.  

Now, choose $n\in\Nn$ sufficiently large so that $2r_{\Phi}\lambda_{\Phi}^n<\tau$.  
Then, taking into account that $x_n\in K_{\Phi}$, we have
\begin{align*}
|a_{j_n}+\delta - \phi_{\omega^{(n)}}(0)|  &= |f^n_{\Phi,\ba+\delta}(x_n) - \phi_{\omega^{(n)}}(0)| \\
&=  |\phi_{\omega^{(n)}}(x_n) - \phi_{\omega^{(n)}}(0)|\\
&\leq \lambda_{\Phi}^n |x_n|\\
&\leq 2r_{\Phi}\lambda_{\Phi}^n\\
&<\tau,
\end{align*}
contradicting \eqref{eq:hyp separation}. 
\end{proof}

\section{Proof of Theorem~\ref{thmB}}\label{proof thmB}

We are now ready to prove Theorem~\ref{thmB}.  Let $\Phi=\{\phi_1,\ldots,\phi_k\}$  be an injective IFS and $\ba=(a_1,\ldots,a_{k-1})\in\Omega_k$ with $k\geq2$.   By Lemma~\ref{lem:sing}, the set 
$$
E=\{\delta\in\Rr\colon (\Phi,\ba+\delta)\text{ has a singular connection}\}
$$ 
is countable. Thus, it is sufficient to show that 
$$
Z:=\{\delta\in \Rr\setminus E\colon f_{\Phi,\ba+\delta}\text{ is not asymptotically periodic}\} 
$$
has zero Hausdorff dimension. By Lemma~\ref{lem:finite}, $Z\subset Z'$ where
$$
Z':=\{\delta\in \Rr\setminus E\colon Q_{\Phi,\ba+\delta}\text{ is not finite}\}.
$$
Given $\delta\in \Rr$ and $\varepsilon>0$, let $\Delta_{\varepsilon}(\delta):=(\delta-\varepsilon,\delta+\varepsilon)$ and $Z'_{\varepsilon}(\delta) := Z'\cap \Delta_{\varepsilon}(\delta)$. We claim that for every $d>0$ and $\delta\in Z'$ there is an $\varepsilon=\varepsilon(\delta,d)>0$ such that
$$
\mathcal{H}^d( Z'_{\varepsilon}(\delta))  = 0,
$$
where $\mathcal{H}^d$ is the $d$-dimensional Hausdorff measure.  This claim is sufficient to conclude the proof of Theorem~\ref{thmB}, since by Lindel\"of Lemma, $Z'$ is a countable union of sets $Z'_{\varepsilon_i}(\delta_i)$, each having zero $d$-dimensional Hausdorff measure. Hence, $\mathcal{H}^d(Z')=0$ for every $d>0$. This implies that $\dim_H Z' = 0$.

Now we prove the claim. Let $d>0$, $\delta_0\in Z'$ and $\varepsilon>0$ that will be chosen later in the proof. By Lemma~\ref{lem:Omega}, 
$$
Z'_{\varepsilon}(\delta_0)\subset \{\delta\in\Delta_{\varepsilon}(\delta_0)\colon \Omega_{\Phi,\ba_0}^\varepsilon \cap S_{\ba+\delta}\neq\emptyset\}
$$
where $\ba_0:=\ba+\delta_0$. According to Lemma~\ref{lem:Omega cover},  for each $n\in\Nn$, the set $\Omega_{\Phi,\ba_0}^\varepsilon$ can be covered by $\# \I^{(n)}_{\Phi,\ba_0}(\varepsilon)$ intervals of length $\ell_n:=2(1+2r_\Phi)\lambda_\Phi^n$ centered at the points $\phi_\omega(0)$ with $\omega\in \I^{(n)}_{\Phi,\ba_0}(\varepsilon)$. Thus, for each $n\in\Nn$, we can also cover $Z'_{\varepsilon}(\delta_0)$ using finitely many intervals of length $\ell_n$,
$$
Z'_{\varepsilon}(\delta_0)\subset \bigcup_{\omega\in \I^{(n)}_{\Phi,\ba_0}(\varepsilon)}\bigcup_{i=1}^{k-1}W_{\omega,i}
$$
where
$W_{\omega,i} = \left[y_{\omega,i}-\frac{\ell_n}{2},y_{\omega,i}+\frac{\ell_n}{2}\right]$, $n=|\omega|$ and
$y_{\omega,i}=\phi_\omega(0)-a_i$ .
Using this cover, it is easy to see that there is an $\varepsilon>0$ such that the $d$-dimensional Hausdorff measure of $Z'_{\varepsilon}(\delta_0)$ is zero. Indeed, for every $n\in\Nn$, 
\begin{align*}
\mathcal{H}_{2\ell_n}^d(Z'_\varepsilon(\delta_0))&=\inf\left\{\sum_{i=1}^\infty (\text{diam}\, U_i)^d\colon \bigcup_{i=1}^\infty U_i \supset Z'_\varepsilon(\delta_0),\, \text{diam}\, U_i < 2\ell_n\right\}\\
&\leq \sum_{\omega\in \I^{(n)}_{\Phi,\ba_0}(\varepsilon)}\sum_{i=1}^{k-1}(\text{diam}\,W_{\omega,i})^d \\&= \sum_{\omega\in \I^{(n)}_{\Phi,\ba_0}(\varepsilon)}\sum_{i=1}^{k-1}\ell_n^d\\
&= (k-1) 2^d(1+2r_\Phi)^d (\# \I^{(n)}_{\Phi,\ba_0}(\varepsilon))\lambda_\Phi^{nd}.
\end{align*}
Notice that, $(\Phi,\ba_0)$ has no singular connections because $\delta_0\in Z'$. Hence, by Lemma~\ref{lem:entropy}, there is an $\varepsilon=\varepsilon(\delta_0,d)>0$ such that 
$$
\lim_{n\to\infty}  (\# \I^{(n)}_{\Phi,\ba_0}(\varepsilon))\lambda_\Phi^{nd} = 0,
$$
from which it follows that $\mathcal{H}^d(Z'_{\varepsilon}(\delta_0))=\lim_{n\to\infty} \mathcal{H}_{2\ell_n}^d(Z'_\varepsilon(\delta_0))=0$, thus proving the claim.  \qed

\section{Proof of Theorem~\ref{thmA}}\label{proof thmA}

Recall that $I=[0,1)$ and let $f\colon I\to I$ be a piecewise $\lambda$-affine map as defined in the introduction with $-1<\lambda<1$.  We may suppose that $\lambda\neq0$,  otherwise $\mathcal{E}_f=\emptyset$ and the result trivially holds.  Let $f_\delta=R_\delta\circ f$ where $R_\delta(x)=\{x+\delta\}$ is the rotation map of angle $\delta\in\Rr$. Notice that $f_\delta$ is also a piecewise $\lambda$-affine map and
$$
 \mathcal{E}_{f}=\mathcal{E}_{f_\delta}+\delta.
$$
Therefore, to prove Theorem~\ref{thmA} it is sufficient to prove that for any piecewise $\lambda$-affine map $f$ we have $\dim_H (\mathcal{E}_f\cap (-\delta_0,\delta_0))=0$ for some $\delta_0>0$.

Since $|\lambda|<1$, the map $f$ has a gap, i.e., $I\setminus f(I)\neq\emptyset$. This gap also has non-empty interior, so we can choose a point $c\in I\setminus f(I)$ such that $\ell:=\text{dist}(c,f(I))>0$. Here, $\text{dist}$ denotes the distance in $I$ induced from the circle $\Rr/\Zz$ through the canonical identification $I\hookrightarrow\Rr\to\Rr/\Zz$. 

Now, we define $g\colon I \to I $ by $$g=R_{-c}\circ f\circ R_{c}.$$ The map $g$ is again a piecewise $\lambda$-affine map of $I$. Clearly, $\ell=\text{dist}(0,g(I))$ since $R_c$ is an isometry of $(I,\text{dist})$. We claim that
\begin{equation}\label{claim1}
\dim_H (\mathcal{E}_g\cap (-\ell,\ell))=0.
\end{equation}
From \eqref{claim1} we conclude the proof of Theorem~\ref{thmA},  because $f$ and $g$ are conjugated through $R_c$, which implies that $\mathcal{E}_f= \mathcal{E}_g$. 

\begin{figure}
\centering
\begin{subfigure}{.5\textwidth}
  \centering
  \includegraphics[width=0.9\linewidth]{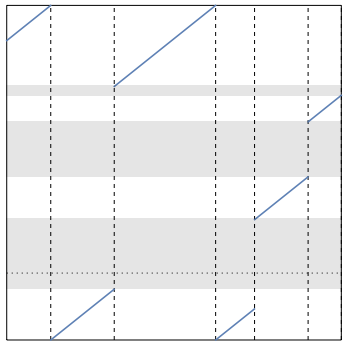}
  \put(-169,32){$c$}
  \caption{$f$}
\end{subfigure}%
\begin{subfigure}{.5\textwidth}
  \centering
  \includegraphics[width=0.9\linewidth]{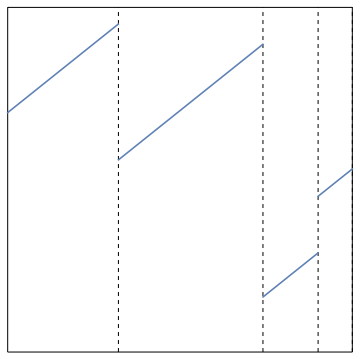}
  \caption{$g$}
\end{subfigure}
\caption{Plots of $f$ and $g=R_{-c}\circ f\circ R_c$.  The shaded region in (A) illustrates the gap of $f$.}
\label{fig2}
\end{figure}

In order to prove claim \eqref{claim1}, notice that, for every  $0\leq \delta<\ell$, we have $\text{dist}(0,g_\delta(I))>0$ where $g_\delta= R_\delta \circ g$. Equivalently, 
\begin{equation}\label{eq:property}
\overline{g_\delta(I)}\subset (0,1),\quad \forall\, \delta\in[0,\ell).
\end{equation}
Next,  extend $g_\delta$ in a canonical way  to a piecewise $\lambda$-affine map $G_\delta$ defined on the whole of $\Rr$. We call $G_\delta$ the affine extension of $g_\delta$. The map $G_\delta$ has the property that $G_\delta(x)=G(x)+\delta$ where $G\colon \Rr\to\Rr$ is the affine extension of $g$. Notice that this property holds because of \eqref{eq:property}. Moreover, since the orbit of any $x\in\Rr$ under $G_\delta$ eventually enters $I$, we conclude that $G_\delta$ is asymptotically periodic if and only if $g_\delta$ is asymptotically periodic. Thus, \eqref{claim1} is equivalent to
\begin{equation}\label{claim2}
\dim_H\left\{\delta\in(-\ell,\ell)\colon G_\delta \text{ is not asymptotically periodic}\right\}=0.
\end{equation}
 Let $(\Phi,\ba)$ be a pair defining $G$, i.e., an injective IFS $\Phi=\{\phi_1,\ldots,\phi_k\}$ and $\ba=(a_1,\ldots,a_{k-1})\in \Omega_k$ such that $G=f_{\Phi,\ba}$. Notice that the $\phi_i$'s are $\lambda$-affine maps. By Theorem~\ref{thmB}, we know that 
 $$
\dim_H\left\{ \delta\in  \Rr \colon f_{\Phi,\ba+\delta}\text{ is not asymptotically periodic}\right\}=0.
$$
It is easy to see that $G_\delta$ and $f_{\Phi,\ba -\delta/(1-\lambda)}$ are conjugated by the affine map $x\mapsto x+\delta/(1-\lambda)$ (cf. \cite[Reduction lemma]{NPR18}). This shows \eqref{claim2}, thus proving Theorem~\ref{thmA}. \qed
\section*{Acknowledgement}
The author was partially supported by the Project CEMAPRE/REM - UIDB/05069/2020 - financed by FCT/MCTES through national funds. The author also wishes to express its gratitude to Benito Pires and Arnaldo Nogueira for stimulating conversations.

\bibliographystyle{amsplain}

\end{document}